\documentclass[letter]{amsart}
\usepackage{amsmath, amsthm, amssymb}
\usepackage{hyperref}
\usepackage{cleveref}
\newtheorem{prop}{Proposition}
\newtheorem{lemma}[prop]{Lemma}
\newtheorem{theorem}[prop]{Theorem}

\theoremstyle{definition}
\newtheorem*{rem}{Remark}
\newtheorem*{ex}{Example}
\newcommand{\N}{\mathbb{N}}
\newcommand{\Z}{\mathbb{Z}}

\newcommand{\F}{\mathbb{F}}
\newcommand{\m}{\mathfrak{m}}
\newcommand{\mSpec}{\operatorname{mSpec}}
\newcommand{\ch}{\operatorname{char}}
\newcommand{\rad}{\operatorname{Rad}}
\newcommand{\Idn}{\text{id}_n}
\DeclareMathOperator{\sgn}{sgn}
\DeclareMathOperator{\diag}{diag}

\everymath{\displaystyle}

\begin{document}
\title{Invertible sums of matrices}
\author{Justin Chen}
\address{Department of Mathematics, University of California, Berkeley,
California, 94720 U.S.A}
\email{jchen@math.berkeley.edu}

\subjclass[2010]{{15A09, 13H99}}

\begin{abstract}
We give an elementary proof of a Caratheodory-type result on the invertibility 
of a sum of matrices, due first to Facchini and Barioli. 
The proof yields a polynomial identity, expressing the determinant of a large 
sum of matrices in terms of determinants of smaller sums. Interpreting these
results over an arbitrary commutative ring gives a stabilization result for a 
filtered family of ideals of determinants. Generalizing in another direction
gives a characterization of local rings. 
An analogous result for semilocal rings is also given -- interestingly, 
the semilocal case reduces to the case of matrices.
\end{abstract}


\maketitle

Let $k$ be a field, and let $A_1, \ldots, A_m \in M_n(k)$ be square 
matrices over $k$, for fixed $n, m \in \N$. Write $[m]$ for the index set 
$\{1, \ldots, m\}$, and for a subset $S \subseteq [m]$, write $|S|$ 
for the cardinality of $S$.
\begin{theorem}
If $A_1 + \ldots + A_m$ is
invertible, then there exists $S \subseteq [m]$
with $|S| \le n$ such that $\sum_{i \in S} A_i$ is invertible.
\end{theorem}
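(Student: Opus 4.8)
The plan is to exploit the multilinearity of the determinant in the columns (equivalently, rows) of $\sum_{i \in [m]} A_i$. For $S \subseteq [m]$ write $D(S) := \det\bigl(\sum_{i \in S} A_i\bigr)$, so that over the field $k$ the matrix $\sum_{i \in S} A_i$ is invertible precisely when $D(S) \ne 0$, and the hypothesis reads $D([m]) \ne 0$. Denoting by $c_j(B)$ the $j$-th column of a matrix $B$, the $j$-th column of $\sum_{i \in S} A_i$ is $\sum_{i \in S} c_j(A_i)$; expanding the determinant linearly in each of its $n$ columns gives
\[
D(S) = \sum_{f : [n] \to S} \det\bigl( c_1(A_{f(1)}), \ldots, c_n(A_{f(n)}) \bigr),
\]
a sum over all functions $f$ from $[n]$ to $S$. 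The point of this expansion is that each summand depends only on the matrices indexed by the image $f([n])$, a set of at most $n$ elements.

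Next I would sort these summands by their image. Setting $T(f) := \det\bigl(c_1(A_{f(1)}), \ldots, c_n(A_{f(n)})\bigr)$ and $U(S) := \sum_{f : f([n]) = S} T(f)$, the expansion above becomes $D(S) = \sum_{S' \subseteq S} U(S')$, since every $f : [n] \to S$ has a unique image $S' \subseteq S$. Möbius inversion over the Boolean lattice of subsets then yields
\[
U(S) = \sum_{S' \subseteq S} (-1)^{|S| - |S'|} D(S').
\]
The key observation --- and the real content of the theorem --- is that $U(S) = 0$ whenever $|S| > n$: a function from the $n$-element set $[n]$ cannot surject onto a set of cardinality exceeding $n$, so no $f$ satisfies $f([n]) = S$ in that range. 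Hence for every $S$ with $|S| > n$ one obtains the identity
\[
D(S) = -\sum_{S' \subsetneq S} (-1)^{|S| - |S'|} D(S'),
\]
expressing the determinant of a large sum purely in terms of determinants of strictly smaller sums. This identity is an equality of polynomials in the entries, so it holds verbatim over any commutative ring, which is what should feed the generalizations promised in the abstract.

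Finally I would deduce the statement by a minimality argument over the field $k$. Let $\mathcal{S} := \{ S \subseteq [m] : D(S) \ne 0 \}$; this is nonempty because $[m] \in \mathcal{S}$ by hypothesis. Choose $S_0 \in \mathcal{S}$ of least cardinality. If $|S_0| > n$, the identity above writes $D(S_0)$ as a linear combination of the $D(S')$ over proper subsets $S' \subsetneq S_0$; since $D(S_0) \ne 0$, at least one such $D(S')$ must be nonzero, producing a member of $\mathcal{S}$ of strictly smaller cardinality and contradicting the choice of $S_0$. Therefore $|S_0| \le n$, and $\sum_{i \in S_0} A_i$ is invertible, as required.

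The routine parts here are the multilinear expansion and the inclusion--exclusion bookkeeping. The one genuinely delicate point, and the hardest to get right, is the vanishing $U(S) = 0$ for $|S| > n$: this pigeonhole step is exactly where the dimension $n$ enters and is what forces the Caratheodory-type bound. I expect the remaining care in a full write-up to lie in justifying the Möbius inversion and tracking the signs, and in noting that passing from the polynomial identity to the invertibility conclusion uses that $k$ is a field (a nonzero sum of terms forces a nonzero term), the subtlety that presumably motivates the separate ring-theoretic treatment later.
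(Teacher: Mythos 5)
Your proof is correct, and it arrives at precisely the paper's central identity: multiplying your M\"obius-inversion formula for $U(S)$ by $(-1)^{|S|}$ and taking $S = [m]$ gives $\sum_{S' \subseteq [m]} (-1)^{|S'|} \det\bigl(\sum_{i \in S'} A_i\bigr) = 0$ whenever $m > n$, which is the paper's Lemma 2 verbatim (your identity for general $|S| > n$ is just Lemma 2 applied to the subfamily indexed by $S$). The difference is in how the identity is reached and how the theorem is then extracted. The paper expands $\det$ via the Leibniz formula, fixes a permutation, and reduces to a bare combinatorial identity in commuting variables (its Lemma 3), proved by extracting the coefficient of each monomial $z_{i_1,1} \cdots z_{i_n,n}$: the alternating sum over supersets $S \supseteq \{i_1, \ldots, i_n\}$ collapses to $(-1)^n (1-1)^{m-n} = 0$. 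Your route --- multilinear expansion over functions $f : [n] \to S$, classification by image, and M\"obius inversion --- packages the same combinatorics more structurally: your pigeonhole observation that $[n]$ cannot surject onto a set of more than $n$ elements plays exactly the role of the paper's binomial collapse (unwinding the M\"obius inversion reproduces the $(1-1)^{m-n}$ computation), and as a bonus it identifies what the alternating sum computes when $|S| \le n$, namely the surjective part $U(S)$, which the paper never needs. The endgames also differ, though only cosmetically: you run a minimal-counterexample argument over $k$, whereas the paper proves by induction on $m$ that $\det\bigl(\sum_{i=1}^m M_i\bigr)$ lies in the ideal generated by the $\det\bigl(\sum_{i \in S} M_i\bigr)$ with $|S| \le n$ for generic matrices and then specializes via an evaluation map --- a formulation it wants anyway for its later ring-theoretic results, and which your closing remark (the identity has coefficients $\pm 1$, hence holds over any commutative ring, while the final step genuinely uses that $k$ is a field) correctly anticipates. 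All your steps check out, including the sign bookkeeping in the inversion and the harmless $S' = \emptyset$ term.
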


Notice that the upper bound $p \le n$ is the best possible: for the 
$n$ elementary matrices $E_{ii}$, $i = 1, \ldots, n$, $E_{11} + \ldots
+ E_{nn} = \Idn$ is invertible, but any sum of $\le n -1$
of the $E_{ii}$ is not invertible. 

In proving Theorem 1, we may assume 
$m > n$ (if $m \le n$, then take $S = [m]$), which 
we do henceforth. With this, Theorem 1 is then a consequence of 
the following polynomial identity:

\begin{lemma}
Let $T := k[x_{i \beta \gamma} \mid 1 \le i \le m, 1 \le \beta, \gamma \le n]$ 
be a polynomial ring over $k$ in $mn^2$ indeterminates, and let 
$M_i := (x_{i \beta \gamma})_{\beta, \gamma =1}^n \in 
M_n(T)$ be generic matrices, for $i = 1, \ldots, m$. If $m > n$, then as 
polynomials (i.e. elements of $T$), 
\[
\sum_{S \subseteq [m]} (-1)^{|S|} \det \left( \sum_{i \in S} M_i \right) = 0.
\]
In particular,
\[
\det \bigg( \sum_{i=1}^m M_i \bigg) \in \left( \det \bigg( 
\sum_{i \in S} M_i \bigg) \; \Big| \; S \subseteq [m], |S| \le n \right).
\]
\end{lemma}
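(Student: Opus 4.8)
The plan is to prove the polynomial identity by expanding each determinant completely and then watching every monomial cancel under a standard inclusion–exclusion collapse. Write $N_S := \sum_{i \in S} M_i$, an $n \times n$ matrix over $T$ whose $(\beta,\gamma)$ entry is the linear form $\sum_{i \in S} x_{i\beta\gamma}$. Applying the Leibniz formula and then expanding each product of linear forms according to which summand matrix supplies each factor, I obtain
\[
\det\left(\sum_{i \in S} M_i\right) = \sum_{\sigma \in S_n} \sgn(\sigma) \sum_{f \colon [n] \to S} \prod_{\beta=1}^n x_{f(\beta)\,\beta\,\sigma(\beta)},
\]
where $f$ ranges over all functions assigning to each row index $\beta$ the label $f(\beta) \in S$ of the matrix contributing that factor.

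Next I would interchange the order of summation across all $S$. A fixed pair $(\sigma, f)$ with $\sigma \in S_n$ and $f \colon [n] \to [m]$ contributes the monomial $\prod_\beta x_{f(\beta)\beta\sigma(\beta)}$ to $\det(N_S)$ precisely when $\mathrm{im}(f) \subseteq S$, so collecting terms by $(\sigma,f)$ gives
\[
\sum_{S \subseteq [m]} (-1)^{|S|} \det\left(\sum_{i \in S} M_i\right) = \sum_{\sigma \in S_n} \sgn(\sigma) \sum_{f \colon [n] \to [m]} \left(\prod_{\beta=1}^n x_{f(\beta)\beta\sigma(\beta)}\right) \sum_{S \,\supseteq\, \mathrm{im}(f)} (-1)^{|S|}.
\]
Writing each $S \supseteq \mathrm{im}(f)$ as $\mathrm{im}(f) \sqcup T$ with $T \subseteq [m] \setminus \mathrm{im}(f)$ factors the inner sum as $(-1)^{|\mathrm{im}(f)|}(1-1)^{m - |\mathrm{im}(f)|}$. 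This is the heart of the argument: since $f$ has domain $[n]$, its image has size at most $n$, and the hypothesis $m > n$ forces $m - |\mathrm{im}(f)| \ge 1$, so the factor $(1-1)^{m-|\mathrm{im}(f)|}$ vanishes. Every coefficient is therefore $0$, which proves the identity.

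For the ideal membership I would isolate the $S = [m]$ term in the identity, expressing $\det(\sum_{i=1}^m M_i)$ as a $\Z$-linear combination of the $\det(\sum_{i \in S} M_i)$ over proper subsets $S \subsetneq [m]$, each of size at most $m-1$. Those $S$ with $|S| \le n$ are already among the prescribed generators; for each $S$ with $|S| > n$, the same identity applied to the subfamily $\{M_i \mid i \in S\}$ (which still has more than $n$ members) rewrites $\det(\sum_{i \in S} M_i)$ in terms of determinants indexed by proper subsets of $S$. A downward induction on cardinality then reduces every term to a generator with index set of size at most $n$, giving the claimed membership. I expect the only delicate point to be the bookkeeping of the multi-index expansion in the first step; once that is set up correctly, the cancellation and the subsequent induction are routine.
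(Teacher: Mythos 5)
Your proposal is correct and takes essentially the same route as the paper: the paper merely packages your inclusion--exclusion collapse as a standalone combinatorial identity (its Lemma 3, proved by checking that each monomial's coefficient is $\pm(1-1)^{m-p} = 0$ for some $p \le n < m$), and its induction on $m$ for the ideal membership is the same argument as your downward induction on $|S|$. If anything, your bookkeeping via $|\mathrm{im}(f)|$ is slightly more careful than the paper's, which tacitly treats the indices $i_1, \ldots, i_n$ in its coefficient computation as distinct.
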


\begin{proof}[Lemma 2 implies Theorem 1:]
Suppose there exist $A_1, \ldots, A_m \in M_n(k)$ with $A_1 + 
\ldots + A_m$ invertible but every sum of $\le n$ of the $A_i$'s is not 
invertible. Write $A_i =: (a_{i \beta \gamma})_{\beta, \gamma = 1}^n$ 
for $i = 1, \ldots, m$ (so $a_{i \beta \gamma} \in k$). Let
$\varphi : T \to k$, $\varphi(x_{i \beta \gamma}) = 
a_{i \beta \gamma}$ be the evaluation map, inducing $\widetilde{\varphi} :
M_n(T) \to M_n(k)$, $\widetilde{\varphi}(M_i) = A_i$. 
Since $\det$ is a polynomial in the entries of a matrix, 
$\varphi(\det( \sum_{i \in S} M_i)) = \det( \widetilde{\varphi}( \sum_{i \in S} M_i))
= \det(\sum_{i \in S} A_i)$ for every 
$S \subseteq [m]$. Applying $\varphi$ to the containment in Lemma 2 
implies that the nonzero element $\det \bigg( \sum_{i=1}^m A_i \bigg)$
is contained in the ideal $\left( \det \bigg( \sum_{i \in S} A_i \bigg) \; \Big| \; 
S \subseteq [m], |S| \le n \right)$ of $k$, but each generator is 0, a 
contradiction. Thus no such $A_i$'s can exist.
\end{proof}

The polynomial identity of Lemma 2 follows in turn from a 
combinatorial identity:

\begin{lemma}
For $m, n \in \N$, $m > n$, let $\{z_{i,j} \mid 1 \le i \le m, 1 \le j \le n\}$ be a 
set of $mn$ commuting indeterminates. Then (as polynomials in 
$\Z[z_{ij}]$) 
\[
\sum_{S \subseteq [m]} (-1)^{|S|} \prod_{j=1}^n \sum_{i \in S} z_{i,j} = 0.
\]
\end{lemma}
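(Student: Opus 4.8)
The plan is to prove Lemma 3 by expanding the product $\prod_{j=1}^n \sum_{i \in S} z_{i,j}$ over $S$ and swapping the order of summation, so that the identity becomes a statement about each individual monomial. Each monomial arising from the product has the form $\prod_{j=1}^n z_{f(j),j}$, where $f : [n] \to S$ is a function choosing, for each column $j$, a row index $i = f(j) \in S$. After expanding everything, the left-hand side becomes a single sum over pairs $(S, f)$ where $f : [n] \to [m]$ is a function with image contained in $S$, weighted by $(-1)^{|S|}$.

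The key step is to reorganize this sum by fixing the function $f$ first and summing over all valid $S$. For a fixed $f : [n] \to [m]$, let $I := f([n])$ be its image; this is a fixed subset of $[m]$ with $|I| \le n$. The sets $S$ that contribute to the coefficient of the monomial $\prod_{j=1}^n z_{f(j),j}$ are exactly those with $I \subseteq S \subseteq [m]$. So the total coefficient of this monomial is $\sum_{I \subseteq S \subseteq [m]} (-1)^{|S|}$. I would then show this inner sum vanishes: writing $S = I \sqcup S'$ with $S' \subseteq [m] \setminus I$, it factors as $(-1)^{|I|} \sum_{S' \subseteq [m] \setminus I} (-1)^{|S'|}$, and the latter sum is $\sum_{k=0}^{|[m] \setminus I|} \binom{|[m] \setminus I|}{k} (-1)^k = (1-1)^{|[m] \setminus I|} = 0$, provided $[m] \setminus I$ is nonempty.

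The crucial point where the hypothesis $m > n$ enters is precisely here: since $|I| \le n < m$, the complement $[m] \setminus I$ is always nonempty, so the binomial sum is an empty-difference $(1-1)^r$ with $r \ge 1$, which is $0$. This is the main (and only real) obstacle — one must verify that \emph{every} monomial, regardless of which function $f$ produced it, has image strictly smaller than $[m]$, guaranteeing a nonvanishing complement. Because $f$ ranges over functions from an $n$-element set, its image can have size at most $n$, and $n < m$ closes the argument uniformly across all monomials.

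Since every monomial on the left-hand side has coefficient zero, the entire polynomial is identically zero, establishing the identity. I expect no technical difficulty beyond the bookkeeping of the index swap; the combinatorial heart is the single observation that an inclusion-exclusion sum $\sum_{I \subseteq S} (-1)^{|S|}$ over supersets of a proper subset $I \subsetneq [m]$ telescopes to zero via the binomial theorem.
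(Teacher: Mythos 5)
Your proposal is correct and follows essentially the same argument as the paper's proof of Lemma 3: fix a monomial (equivalently, your function $f$), observe that it occurs exactly once with sign $(-1)^{|S|}$ for each $S$ containing its support, and collapse the sum over supersets via the binomial theorem, using $m > n$ to ensure the complement is nonempty. If anything, your formulation in terms of the image $I = f([n])$ handles repeated indices slightly more cleanly than the paper's sign exponent $(-1)^{|S'|+n}$, which tacitly treats $\{i_1,\ldots,i_n\}$ as having $n$ elements, though this discrepancy is a harmless overall sign and does not affect the vanishing.
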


\begin{proof}[Lemma 3 implies Lemma 2:]
By definition, for an $n \times n$ matrix 
$Y = (y_{\beta \gamma})_{\beta, \gamma = 1}^n$,
\[
\det Y = \sum_{\sigma \in S_n} \sgn(\sigma) \prod_{j=1}^n y_{j \sigma(j)}.
\]
Hence
\begin{align*}
\sum_{S \subseteq [m]} (-1)^{|S|} \det \bigg( \sum_{i \in S} M_i \bigg)
= &\sum_{S \subseteq [m]} (-1)^{|S|} \sum_{\sigma \in S_n} \sgn(\sigma) 
\prod_{j=1}^n \sum_{i \in S} x_{i j \sigma(j)} \\
= &\sum_{\sigma \in S_n} \sgn(\sigma) \sum_{S \subseteq [m]} (-1)^{|S|} 
\prod_{j=1}^n \sum_{i \in S} x_{i j \sigma(j)}
\end{align*}

so for fixed $\sigma \in S_n$, setting $z_{i,j} := x_{i j \sigma(j)}$ in Lemma 3 
gives the desired vanishing.

The second statement of Lemma 2 follows from the first, by induction on $m$:
the identity yields $\det \bigg( \sum_{i=1}^m M_i \bigg) \in \left( \det \bigg( 
\sum_{i \in S} M_i \bigg) \; \Big| \; S \subseteq [m], |S| < m \right)$, which 
implies both the base case (by taking $m = n + 1$) and the inductive step.
\end{proof}

\begin{proof}[Proof of Lemma 3:]
As every monomial in the sum is of the form
$z_{i_1,1} \ldots z_{i_n, n}$ for some $i_j \in [m]$, it suffices 
to show that the coefficient of $z_{i_1,1} \ldots z_{i_n,n}$ is $0$, for any 
fixed choice of $i_1, \ldots, i_n \in [m]$. Now $z_{i_1,1} \ldots z_{i_n,n}$ 
appears for a particular $S \subseteq [m]$ iff 
$\{i_1, \ldots, i_n \} \subseteq S$, and for such an $S$, 
$z_{i_1,1} \ldots z_{i_n,n}$ appears exactly once, with coefficient 
$(-1)^{|S|}$. Thus the coefficient of $z_{i_1,1} \ldots z_{i_n,n}$ is 
\begin{align*}
\sum_{\{i_1,\ldots,i_n \} \subseteq S \subseteq [m]} (-1)^{|S|} &=
\sum_{S' \subseteq [m] \setminus \{i_1,\ldots,i_n \}} (-1)^{|S'|+n} \\
&= \sum_{l=0}^{m-n} {m - n \choose l} (-1)^{l+n} \\
&= (-1)^n(1 - 1)^{m-n} = 0. \qedhere
\end{align*}
\end{proof}

\begin{rem}
It has come to our attention that the statement of Theorem 1 has in fact 
appeared before, posed as a problem with accompanying solution in 
\cite{FBCC}. Also, a version of the first statement of Lemma 2 can 
be found in \cite[Theorem 2.2]{CS}. The solution given in \cite{FBCC}
follows a slightly different approach than the proof given here, as well as 
using different lemmas. We have chosen to present the reasoning here 
for its originality, and to emphasize the simple yet pleasing proof of Lemma 3.

It should be noted that the theorems of both \cite{CS} and \cite{FBCC}
are stated only for fields. To the best of our knowledge, the ideal-theoretic 
results below have not been observed before. 
Other studies of determinants (and characteristic polynomials) of sums 
of matrices can be found in \cite{SA}, \cite{RS}. 
\end{rem}

Now let $R$ be a ring (henceforth all rings, except for matrix rings, 
are always commutative with $1 \ne 0$). It is natural to ask: to what extent
do the above results generalize to $M_n(R)$? We first give a generalization of 
Lemma 2:

\begin{prop}
Let $R$ be a ring, $n \in \N$, $I$ an index set (possibly infinite). For 
any collection of matrices $\{ A_i \mid i \in I \} \subseteq M_n(R)$, consider 
the $R$-ideals
\[
I_j := \left( \det \bigg( \sum_{i \in S} A_i \bigg) \; \Big| \; S \subseteq I, 
|S| \le j \right)
\]
for $j \in \N$. Then $0 = I_0 \subseteq I_1 \subseteq \ldots \subseteq I_n
= I_{n+1} = \ldots$ is an ascending chain of ideals which stabilizes at 
position $n$.
\end{prop}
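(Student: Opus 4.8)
The plan is to deduce the Proposition from Lemma 2, after first upgrading Lemma 2 from fields to arbitrary commutative rings. Two of the assertions are immediate. The leftmost equality $I_0 = 0$ holds because the only $S \subseteq I$ with $|S| \le 0$ is $S = \emptyset$, for which $\sum_{i \in \emptyset} A_i$ is the zero matrix and $\det 0 = 0$. The inclusions $I_j \subseteq I_{j+1}$ are equally clear, since any $S$ with $|S| \le j$ also satisfies $|S| \le j+1$, so the generating set defining $I_j$ is contained in that defining $I_{j+1}$. Hence the real content is the stabilization $I_n = I_{n+1} = \cdots$, and by the ascending property it is enough to check $I_j \subseteq I_n$ for each $j > n$; that is, that every generator $\det(\sum_{i \in S} A_i)$ with $n < |S| \le j$ lies in $I_n$. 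Each such $S$ is finite, so only finitely many $A_i$ are ever involved and the possible infinitude of $I$ is harmless.

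The crucial observation is that Lemma 2 is valid over $\Z$, not merely over a field. Indeed, Lemma 3 is an identity in $\Z[z_{ij}]$, and the passage from Lemma 3 to Lemma 2 uses only the Leibniz expansion $\det Y = \sum_{\sigma} \sgn(\sigma) \prod_j y_{j\sigma(j)}$, whose coefficients $\sgn(\sigma) \in \{\pm 1\}$ are integers; no field hypothesis, cancellation, or characteristic assumption enters anywhere. Thus both statements of Lemma 2 hold with $k$ replaced by $\Z$ and $T$ by $\Z[x_{i\beta\gamma}]$. In particular the containment
\[
\det\left(\sum_{i=1}^m M_i\right) \in \left(\det\left(\sum_{i \in S'} M_i\right) \;\Big|\; S' \subseteq [m],\, |S'| \le n\right)
\]
is an ideal membership in $\Z[x_{i\beta\gamma}]$ whenever $m > n$.

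To finish, fix $S \subseteq I$ with $|S| = m$ where $n < m \le j$, and consider the evaluation homomorphism $\psi \colon \Z[x_{i\beta\gamma} \mid 1 \le i \le m] \to R$ that sends the generic matrices $M_1, \ldots, M_m$ to the matrices indexed by $S$ under some fixed bijection $[m] \to S$. Because $\det$ is a polynomial with integer coefficients in the matrix entries, $\psi$ sends each $\det(\sum_{i \in S'} M_i)$ to the determinant of the corresponding sum of the $A_i$, exactly as in the proof that Lemma 2 implies Theorem 1. Applying $\psi$ to the displayed containment, and using that a ring homomorphism carries an ideal into the ideal generated by the images of its generators, we obtain
\[
\det\left(\sum_{i \in S} A_i\right) \in \left(\det\left(\sum_{i \in S'} A_i\right) \;\Big|\; S' \subseteq S,\, |S'| \le n\right) \subseteq I_n.
\]
Every new generator of $I_j$ therefore lies in $I_n$, giving $I_j \subseteq I_n$ and hence $I_j = I_n$ for all $j > n$.

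The only genuine obstacle is the field-to-ring passage in the second paragraph; everything else is formal bookkeeping. Its resolution is precisely the integrality of Lemmas 2 and 3, which lets their conclusions specialize along the evaluation map $\psi$. I would therefore spend the most care making that base change explicit — in particular, confirming that ideal membership is preserved under $\psi$ — and treat the remaining inclusions and the computation $I_0 = 0$ as routine.
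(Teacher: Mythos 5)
Your proof is correct and takes essentially the same approach as the paper: both hinge on the observation that Lemma 2 involves only coefficients $\pm 1$ and hence holds over $\Z[x_{i\beta\gamma}]$, followed by specialization to $R$ via an evaluation homomorphism (the paper routes this through $R[x_{i\beta\gamma}]$, while you map $\Z[x_{i\beta\gamma}]$ directly to $R$ --- an immaterial difference). Your explicit generator-by-generator reduction and the remark that each $S$ is finite (so an infinite index set $I$ causes no trouble) are just the bookkeeping the paper's terser proof leaves implicit.
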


\begin{proof}
It is immediate from the definition that $I_j \subseteq I_{j+1}$ for all $j \in \N$,
so it suffices to show that $I_m \subseteq I_n$ for $m > n$ by induction. 
This follows from the proof of Lemma 2: since the identity in Lemma 2 only
involves coefficients of $\pm 1$, it continues to hold in the polynomial ring
$\Z[x_{i \beta \gamma}]$. Applying the universal map $\Z \to R$ shows that
the identity holds also in $R[x_{i \beta \gamma}]$, and specializing to $R$
gives the result.
\end{proof}

\begin{theorem}
The following are equivalent for a ring $R$:

\indent (i) $R$ is local, i.e. has a unique maximal ideal $\m$

\indent (ii) For all (equivalently some) $n \ge 1$ and $A_1, \ldots, A_m \in 
M_n(R)$ with $A_1 + \ldots + A_m$ invertible, there exists $S \subseteq 
[m]$ with $|S| \le n$ such that $\sum_{i \in S} A_i$ is invertible.
\end{theorem}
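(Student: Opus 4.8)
The plan is to prove the two implications separately, and to note that together they also account for the parenthetical ``all versus some'': (i) will imply (ii) for \emph{every} $n$, while (ii) for \emph{any single} $n$ will imply (i). Hence if (ii) holds for some $n$ it forces (i), which in turn gives (ii) for all $n$, so the two quantifications are equivalent.

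For (i) $\Rightarrow$ (ii) I would reduce to the field case of Theorem 1. Let $\m$ be the maximal ideal, $k := R/\m$ the residue field, and $B \mapsto \overline{B}$ the induced reduction $M_n(R) \to M_n(k)$. The key observation is that, since $R$ is local, a matrix $B \in M_n(R)$ is invertible iff $\det B$ is a unit, iff $\det B \notin \m$, iff $\overline{\det B} = \det \overline{B} \ne 0$, iff $\overline{B}$ is invertible over $k$. Thus if $\sum_{i=1}^m A_i$ is invertible over $R$, then $\sum_{i=1}^m \overline{A_i}$ is invertible over $k$; Theorem 1 yields $S$ with $|S| \le n$ and $\sum_{i \in S}\overline{A_i}$ invertible, and running the equivalence backwards shows $\sum_{i \in S} A_i$ is invertible over $R$.

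For (ii) $\Rightarrow$ (i) I would argue by contrapositive, exhibiting an explicit counterexample to (ii) whenever $R$ fails to be local. Recall the standard characterization that $R$ is local iff for every $a \in R$ at least one of $a$, $1-a$ is a unit; so if $R$ is not local there is an $a$ with both $a$ and $1-a$ non-units. Fixing the given $n$, set $m = n+1$ and take the diagonal matrices $A_i = E_{ii}$ for $1 \le i \le n-1$, together with $A_n = a E_{nn}$ and $A_{n+1} = (1-a)E_{nn}$. Then $\sum_{i=1}^{n+1} A_i = \Idn$ is invertible. Every partial sum $\sum_{i \in S} A_i$ is diagonal, hence invertible iff each of its diagonal entries is a unit: the first $n-1$ entries force $\{1, \ldots, n-1\} \subseteq S$, while the $(n,n)$ entry equals $0$, $a$, $1-a$, or $a+(1-a) = 1$ according as $S$ contains neither, only $n$, only $n+1$, or both of $n, n+1$ -- of which only the last is a unit. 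So the sole $S$ giving an invertible sum is $S = [n+1]$, with $|S| = n+1 > n$, contradicting (ii). This is exactly the tightness example $E_{11} + \ldots + E_{nn}$, with the last idempotent split via $a + (1-a) = 1$ so that the two non-units $a, 1-a$ block every proper subset.

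I expect the only real subtlety to lie in the (ii) $\Rightarrow$ (i) direction: one must select the witnessing non-units correctly and verify the invertibility condition over a general (non-local) commutative ring, where ``invertible'' must consistently be read as ``determinant is a unit''. That verification is nonetheless routine, since all matrices in the construction are diagonal, so the determinant of any partial sum factors as the product of its diagonal entries and unit-ness can be checked one entry at a time. The (i) $\Rightarrow$ (ii) direction is essentially a translation through the residue field and should present no difficulty once the invertibility-reduction equivalence is recorded.
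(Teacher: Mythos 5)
Your proof is correct and takes essentially the same approach as the paper: your (i) $\Rightarrow$ (ii) argument is precisely the alternate proof the paper mentions parenthetically (reduce mod $\m$ and apply Theorem 1, using that over a local ring $B$ is invertible iff $\det B \notin \m$ iff $\overline{B}$ is invertible over $R/\m$), the paper's primary argument being the equivalent ideal-theoretic one via Proposition 4. Your (ii) $\Rightarrow$ (i) counterexample -- $E_{11}, \ldots, E_{n-1,n-1}$, $aE_{nn}$, $(1-a)E_{nn}$ with $a$, $1-a$ both non-units -- is only a cosmetic variant of the paper's choice of $m_1 E_{ii}$ for $i \in [n]$ together with $m_2 \cdot \Idn$ where $m_1 + m_2 = 1$, and your diagonal-entry verification is equally routine.
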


\begin{proof}
(i) $\implies$ (ii): First, note that for any $A \in M_n(R)$, $A$ is invertible 
iff $\det A$ is a unit in $R$. Form the ideals $I_j$ for $j \in [m]$ as above.
By hypothesis, $I_m$ contains a unit, so by Proposition 4 so does $I_n$. 
Then one of the generators of $I_n$ is a unit: if not, then each generator
would be in $\m$, hence $I_n$ would be as well, a contradiction. 
(Alternate proof: applying $R \twoheadrightarrow R/\m$ reduces to Theorem 1).

(ii) $\implies$ (i): Suppose $R$ has two distinct maximal ideals $\m_1, \m_2$.
Then there exists $m_1 \in \m_1$, $m_2 \in \m_2$ with $m_1 + m_2 = 1$. For 
an $n$ such that (ii) holds, let $A_i := \diag(0, \ldots, m_1, \ldots, 0) \in M_n(R)$ 
be the diagonal matrix with $m_1$ in the $i^\text{th}$ spot and 0 elsewhere, for 
$i = 1, \ldots, n$, and $A_{n+1} := m_2 \cdot \Idn$. Then $A_1 + \ldots + A_{n+1} 
= \Idn$, but any sum of $\le n$ of the $A_i$'s has determinant either 0, $m_1^n$, 
or $m_2^j$ for some $1 \le j \le n$, hence is not invertible.
\end{proof}

\begin{ex}
(1): Proposition 4 implies that if $I_n \subseteq J$ for some $R$-ideal $J$, then
so is $I_m$ for all $m$; e.g. if $A_1, A_2, A_3, A_4 \in M_3(\Z)$, and
any sum of at most 3 has determinant divisible by 10, then 
$\det(A_1 + A_2 + A_3 + A_4)$ is also divisible by 10. In particular, taking 
$J = I_n$ generalizes Theorem 5: for any ring $R$ and 
$A_1, \ldots, A_m \in M_n(R)$, if $I_n \ne R$ then no (distinct) sum of the $A_i$'s 
is invertible.

(2): For generic matrices, the ideals $I_j$ quickly become infeasible to compute.
The smallest nontrivial case is $n = 2$: here $R = k[x_{i \beta \gamma} \mid 1 \le i \le 3, 
1 \le \beta, \gamma \le 2]$ is a polynomial ring in 12 variables over a field $k$, and 
$A_i = (x_{i \beta \gamma}) \in M_2(R)$, $i = 1, 2, 3$. Then $I_1$ is a complete 
intersection prime ideal of codimension 3, whereas $I_2$ has two minimal 
primes of codimension 5, and one embedded prime of codimension 7.
\end{ex}

Theorem 5 says, in some sense, that the question of when a large sum of matrices
is invertible is determined by the $1 \times 1$ case (precisely) when the ring is local.
Motivated by this, we now shift perspectives and ask: if the ring is semilocal (i.e. has 
only finitely many maximal ideals), when is a large sum of ring elements ($= 1 \times
1$ matrices) invertible? The following result is interesting in that it follows from a 
result for (a specific class of) noncommutative rings, but the proof is 
not obtained by imposing commutativity verbatim!

\begin{theorem}
Let $R$ be a ring with $n$ maximal ideals (say $\mSpec R = \{\m_1, \ldots, \m_n\}$),
and let $a_1, \ldots, a_m \in R$ with $a_1 + \ldots + a_m \in R^\times$.
If $\ch R/\m_i = \ch R/\m_j$ for all $i, j$ (e.g. if $R$ contains a field), then there exists 
$S \subseteq [m]$ with $|S| \le n$ such that $\sum_{i \in S} a_i \in R^\times$.
\end{theorem}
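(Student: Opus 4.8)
The plan is to reduce the statement directly to Theorem 1, by packaging $R$ into a matrix ring $M_n(K)$ over a single field $K$ so that units of $R$ correspond to invertible matrices, with the matrix size equal to the number of maximal ideals. First I would record the elementary fact that $r \in R$ is a unit iff its image $\bar r^{(i)}$ in each residue field $k_i := R/\m_i$ is nonzero: indeed $r \in R^\times$ iff $r$ lies in no maximal ideal, and $\m_1, \ldots, \m_n$ are all of them. Thus nonvanishing in every residue field detects units exactly.

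The key step is to manufacture a single field $K$ together with field embeddings $\iota_i : k_i \hookrightarrow K$ for every $i$, and this is precisely where the equal-characteristic hypothesis enters. Since all $k_i$ share a common prime field $F$ (namely $\Q$ or $\F_p$), I can form the $F$-algebra $A := k_1 \otimes_F \cdots \otimes_F k_n$. Each $k_i$ is a nonzero $F$-vector space, so $A \ne 0$ (a tensor product over a field of nonzero vector spaces is nonzero); hence $A$ has a maximal ideal $\mathfrak n$, and $K := A/\mathfrak n$ is a field. Each composite $k_i \to A \to K$ is a ring map from a field into a nonzero ring, hence injective, supplying the desired $\iota_i$.

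With $K$ in hand, I would define the ring homomorphism $\psi : R \to M_n(K)$ by $\psi(r) := \diag\bigl(\iota_1(\bar r^{(1)}), \ldots, \iota_n(\bar r^{(n)})\bigr)$. Because $\psi(r)$ is diagonal and each $\iota_i$ is injective, $\psi(r)$ is invertible iff all its diagonal entries are nonzero iff $\bar r^{(i)} \ne 0$ for every $i$ iff $r \in R^\times$. Applying Theorem 1 to the matrices $\psi(a_1), \ldots, \psi(a_m)$, whose sum $\psi(\sum_i a_i)$ is invertible since $\sum_i a_i$ is a unit, yields $S \subseteq [m]$ with $|S| \le n$ and $\sum_{i \in S} \psi(a_i) = \psi\bigl(\sum_{i \in S} a_i\bigr)$ invertible; by the correspondence this forces $\sum_{i \in S} a_i \in R^\times$, as required.

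The main obstacle is the construction of the common field $K$, and it is exactly here that the hypothesis $\ch k_i = \ch k_j$ is indispensable: fields of differing characteristics admit no embedding into a common field, so no homomorphism $\psi$ of the above type could exist. The tensor-product construction is the clean route, and the only point needing care is the nonvanishing $A \ne 0$, which rests on the freeness of each $k_i$ as an $F$-vector space. This is also what is meant by the semilocal case reducing to the matrix case: the number of maximal ideals becomes the matrix dimension, and Theorem 1 does the combinatorial work.
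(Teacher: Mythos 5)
Your proof is correct and takes essentially the same approach as the paper's: embed the residue fields $k_i$ into a common field $K$ (constructed, exactly as in the paper, as a residue field of the tensor product $k_1 \otimes_F \cdots \otimes_F k_n$ over the common prime field), map $R$ to diagonal matrices in $M_n(K)$ so that units correspond to invertible matrices, and invoke Theorem 1. The only cosmetic difference is that you detect units directly via nonvanishing in every residue field, whereas the paper first passes to $R/\rad(R) \cong k_1 \times \cdots \times k_n$ by the Chinese Remainder Theorem; nothing essential changes.
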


\begin{proof}
For any ring $S$ and $a \in S$, $a \in S^\times$ iff $\bar{a} \in (S/\rad(S))^\times$,
where $\rad(S)$ is the Jacobson radical of $S$, i.e. the intersection of all maximal ideals
of $S$. For $R$ as above, $\rad(R) = \m_1 \cap \ldots \cap \m_n$ is a finite intersection, 
so by Chinese Remainder $R/\rad(R) \cong R/\m_1 \times \ldots \times R/\m_n$. Thus we 
may assume $R$ is a direct product of $n$ fields $k_1, \ldots, k_n$. 

Now by assumption 
$\ch k_i = \ch k_j$ for all $i, j$, so there exists a large field $K$ such that $k_i 
\hookrightarrow K$ for all $i$ (e.g. any residue field of $k_1 \otimes_k \ldots \otimes_k k_n$,
where $k$ is the (common) prime field; cf. \cite{Bo}, Section V.2.4, Cor. to Prop. 4). There is 
a ring map $\varphi : R \to M_n(K)$, sending $(r_1, \ldots, r_n) \in 
k_1 \times \ldots \times k_n \mapsto \diag(r_1, \ldots, r_n) \in M_n(K)$.
Then $r \in R$ is a unit iff $\varphi(r)$ is a unit in $M_n(K)$, so applying Theorem 1 
to $\varphi(a_1), \ldots, \varphi(a_m) \in M_n(K)$ gives the result.
\end{proof}

\begin{rem}
If $n = 2$, then Theorem 6 holds without assuming that $\ch R/\m_1 = \ch R/\m_2$:
if $a_1, a_2 \in k_1 \times k_2$ are such that none of $a_1, a_2, 1 - a_1, 1 - a_2$ is a unit, 
then $a_1, a_2 \in \{(1,0), (0,1)\}$, so either $a_1 + a_2$ or $1 - (a_1 + a_2)$ is a unit.

In general though, the hypothesis of equal characteristics in Theorem 6 is crucial, as the
following examples show: 
\end{rem}

\begin{ex}
In $\F_2 \times \F_3 \times k$ (where $k$ is any field), the elements 
\[a_1 = (0, 1, 1), \]
\[a_2 = a_3 = (1, -1, 0), \]
\[a_4 = 1 - (a_1 + a_2 + a_3) \]
satisfy $a_1 + a_2 + a_3 + a_4 = 1$, but no subset of $\{a_1, a_2, a_3, a_4\}$ of size 
$\le 3$ sums to a unit (to mentally verify this, it suffices to check that any nonempty subsum 
of $\{a_1, a_2, a_3\}$ contains both a coordinate equal to $0$ and a coordinate equal to $1$).

There are also (many) such examples with all elements $a_i$ distinct: in 
$\F_2 \times \F_3 \times k_1 \times k_2$ (where $k_1, k_2$ are any fields), the elements
\[ a_1 = (0,0, 0,1), \]
\[ a_2, a_3, a_4 = (1,-1,0,*), \]
\[ a_5 = 1 - (a_1+a_2+a_3+a_4) = (0,1,1, \cdot) \]
also sum to a unit, although no proper subset of them does. 
Here each $*$ can be taken to be any element in $k_2$, so if $|k_2| > 2$, then the 
$a_i$ can be chosen to be pairwise distinct.
\end{ex}

Finally, we record some additional interesting consequences of Lemma 2. 
The key feature of the next proposition is nonemptyness of the subset $S$:

\begin{prop}
Let $R$ be a ring, $n \in \N$, and pick any $A_1, \ldots, A_n \in M_n(R)$. 
Then for any $B \in M_n(R)$ with $\det B \ne 0$, there exists $\emptyset \ne S
\subseteq [n]$ such that $\det \left( \bigg( \sum_{i \in S} A_i \bigg) + B \right) - 
\det \bigg( \sum_{i \in S} A_i \bigg) \ne 0$.
\end{prop}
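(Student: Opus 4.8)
The plan is to derive this from the polynomial identity of Lemma 2 by treating $B$ as one extra matrix. Concretely, I would set $A_{n+1} := B$, giving $n+1$ matrices $A_1, \ldots, A_n, A_{n+1} \in M_n(R)$. Since $m := n+1 > n$, the vanishing identity of Lemma 2 applies; as noted in the proof of Proposition 4, it holds over an arbitrary ring $R$ because the underlying identity of Lemma 3 has coefficients $\pm 1$. Thus
\[
\sum_{S \subseteq [n+1]} (-1)^{|S|} \det\left( \sum_{i \in S} A_i \right) = 0.
\]

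Next I would split this sum according to whether the index $n+1$ lies in $S$. Every $S$ containing $n+1$ can be written uniquely as $S' \cup \{n+1\}$ with $S' \subseteq [n]$, and then $\sum_{i \in S' \cup \{n+1\}} A_i = \left( \sum_{i \in S'} A_i \right) + B$, while $|S| = |S'| + 1$ flips the sign. Regrouping the two families of terms over subsets of $[n]$ yields
\[
\sum_{S \subseteq [n]} (-1)^{|S|} \left[ \det\left( \sum_{i \in S} A_i + B \right) - \det\left( \sum_{i \in S} A_i \right) \right] = 0.
\]
Writing $D(S)$ for the bracketed difference (the very quantity the statement asks to be nonzero for some nonempty $S$), this reads $\sum_{S \subseteq [n]} (-1)^{|S|} D(S) = 0$.

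Finally I would isolate the empty-set term. Since $\sum_{i \in \emptyset} A_i$ is the zero matrix and $\det(0) = 0$ (as $n \ge 1$), we get $D(\emptyset) = \det(B) - \det(0) = \det B$, which is nonzero by hypothesis. Peeling off this term gives $\det B = -\sum_{\emptyset \ne S \subseteq [n]} (-1)^{|S|} D(S)$. Because the left-hand side is nonzero, the sum on the right cannot vanish term by term, so $D(S) \ne 0$ for at least one nonempty $S \subseteq [n]$, which is exactly the assertion.

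I do not expect a serious obstacle here: the one piece of insight is the bookkeeping device of appending $B$ as an $(n+1)$-th matrix so as to enter the regime $m > n$ in which Lemma 2 provides a vanishing identity. After that, the argument is purely a matter of separating the $n+1 \in S$ terms and observing that the empty subset contributes precisely $\det B$. The only points warranting care are the sign accounting in the regrouping and the fact that $\det(0) = 0$, which together guarantee $D(\emptyset) = \det B$ rather than a shifted quantity.
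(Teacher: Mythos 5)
Your proof is correct and follows essentially the same route as the paper: the paper likewise applies Lemma~2 to the $n+1$ matrices $A_1, \ldots, A_n, B$ and states the regrouped identity $\sum_{\emptyset \ne S \subseteq [n]} (-1)^{|S|} \left( \det \left( \sum_{i \in S} A_i \right) - \det \left( \left( \sum_{i \in S} A_i \right) + B \right) \right) = \det B$ directly, which is exactly what your splitting over $n+1 \in S$ and isolation of the empty-set term produce (up to an overall sign). Your explicit appeal to the $\pm 1$ coefficients to justify validity over an arbitrary ring $R$ is a point the paper leaves implicit, so the bookkeeping is, if anything, slightly more careful than the original.
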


\begin{proof}
This follows from the identity
\[
\sum_{\emptyset \ne S \subseteq [n]} (-1)^{|S|} \left( \det \bigg( \sum_{i \in S} 
A_i \bigg) - \det \left( \bigg( \sum_{i \in S} A_i \bigg) + B \right) \right) = \det B
\]
which results from applying Lemma 2 to 
$A_1, \ldots, A_n, B$.
\end{proof}

We end with a geometric interpretation, in terms of additively generated point 
configurations on cones over projective hypersurfaces:

\begin{prop}
Let $k$ be a field, $\operatorname{char} k = 0$, $n \in \N$, and let $X = V({\det}_n) 
\subseteq \mathbb{A}^{n^2}_k$ be the affine cone over the determinantal hypersurface. 
Let $\Delta \subseteq \mathbb{A}^{n^2}$ be an $n$-simplex with first barycentric 
subdivision $\overline{\Delta}$. If all vertices of $\overline{\Delta}$ other than the 
centroid lie on $X$, then in fact the centroid of $\overline{\Delta}$ also lies on $X$.
\end{prop}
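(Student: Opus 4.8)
The plan is to recognize this as a direct geometric restatement of Lemma 2. First I would identify the ambient space $\mathbb{A}^{n^2}_k$ with the matrix space $M_n(k)$, so that $X = V({\det}_n)$ becomes precisely the locus of singular matrices and a point lies on $X$ exactly when the associated matrix has determinant $0$. Writing the $n+1$ vertices of $\Delta$ as matrices $A_0, A_1, \ldots, A_n \in M_n(k)$, the vertices of the first barycentric subdivision $\overline{\Delta}$ are the barycenters of the faces of $\Delta$, namely the points $b_S := \frac{1}{|S|} \sum_{i \in S} A_i$ indexed by the nonempty subsets $S \subseteq \{0, 1, \ldots, n\}$. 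The centroid of $\overline{\Delta}$ is the barycenter $b_{\{0, \ldots, n\}} = \frac{1}{n+1} \sum_{i=0}^n A_i$ of the whole simplex, and the hypothesis is exactly that $b_S \in X$ for every \emph{proper} nonempty $S$.

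Next I would exploit the homogeneity of the determinant to translate the geometric incidence condition into an algebraic one: since $\det$ is homogeneous of degree $n$, one has $\det(b_S) = |S|^{-n} \det\left(\sum_{i \in S} A_i\right)$, and because $\operatorname{char} k = 0$ the scalar $|S|$ is nonzero in $k$ for every $1 \le |S| \le n+1$. Hence $b_S \in X$ is equivalent to $\det\left(\sum_{i \in S} A_i\right) = 0$. This is the one place where the characteristic hypothesis is used --- it guarantees that each barycentric denominator is invertible --- and it is the only genuinely delicate point; the same argument in fact works whenever $\operatorname{char} k = 0$ or $\operatorname{char} k > n+1$.

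Finally I would apply Lemma 2 to the $m = n+1$ matrices $A_0, \ldots, A_n$, which is legitimate since $m = n+1 > n$. The identity gives $\sum_{S \subseteq \{0, \ldots, n\}} (-1)^{|S|} \det\left(\sum_{i \in S} A_i\right) = 0$; the empty term is $\det(0) = 0$, and by hypothesis every proper nonempty term vanishes, so only the full-set summand $(-1)^{n+1} \det\left(\sum_{i=0}^n A_i\right)$ survives. Therefore $\det\left(\sum_{i=0}^n A_i\right) = 0$, which by the equivalence of the previous paragraph says precisely that the centroid $b_{\{0, \ldots, n\}}$ lies on $X$. I expect no real obstacle beyond correctly matching the combinatorics of the barycentric subdivision to the subset sums of Lemma 2; in particular the affine independence of the simplex's vertices plays no role in the argument.
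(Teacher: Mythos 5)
Your proof is correct and follows essentially the same route as the paper: identify the vertices of $\overline{\Delta}$ with the barycenters $\frac{1}{|S|}\sum_{i \in S} p_i$, use homogeneity of $\det$ (where $\ch k = 0$ ensures the scalars $|S|$ are invertible) to reduce incidence on $X$ to vanishing of $\det\left(\sum_{i \in S} p_i\right)$, and apply the alternating-sum identity of Lemma 2 to the $n+1 > n$ vertex matrices. Your write-up merely makes explicit the details the paper leaves implicit, including the correct observation that $\ch k > n+1$ would also suffice and that affine independence is never used.
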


\begin{proof}
Let $p_0, \ldots, p_n$ be the vertices of $\Delta$. Each vertex of $\overline{\Delta}$
is of the form $\frac{1}{|S|} \bigg( \sum_{i \in S} p_i \bigg)$ for some $\emptyset 
\ne S \subseteq [n]$, which (by homogeneity) lies on $X$ iff $\sum_{i \in S} p_i$ 
does. Viewing each $p_i$ as an $n \times n$ matrix over $k$ and applying 
Lemma 2 gives the result.
\end{proof}

In fact, the proof of \cite{FBCC} shows that Lemma 2 holds for any 
homogeneous polynomial, so Proposition 8 actually holds for any cone $X$ (in 
any affine space) cut out by a degree $n$ polynomial (and thus also for any 
intersection of such cones). 

\vskip 2ex

\noindent \textbf{Acknowledgements:} The author would like to thank Joe Kileel
for valuable discussions, in particular for contributing to the proof of Lemma 3;
as well as David Eisenbud for helpful comments. The examples following Theorems 5 
and 6 were carried out with the help of Macaulay2. 

\medskip

\end{document}